\documentclass{article}
\usepackage[utf8]{inputenc}
\usepackage{enumerate}
\usepackage{amssymb, bm}
\usepackage{amsthm}
\usepackage{amsmath}
\usepackage{tikz-cd} 
\usepackage{bbold}
 \usepackage[all]{xy}
\title{On Junyan Cao's vanishing theorem for pseudoeffective line bundles}
\author{Xiaojun WU}
\date{\today}
\newtheorem{mythm}{Theorem}
\newtheorem{mylem}{Lemma}
\newtheorem{myprop}{Proposition}
\newtheorem{mycor}{Corollary}

\newtheorem{myrem}{Remark}
\setlength{\oddsidemargin}{3.5mm}
\setlength{\evensidemargin}{3.5mm}
\setlength{\textwidth}{15cm}
\setlength{\textheight}{24cm}
\setlength{\topmargin}{-2cm}

\begin{document}
\def\cI{\mathcal{I}}
\def\Z{\mathbb{Z}}
\def\Q{\mathbb{Q}}  \def\C{\mathbb{C}}
 \def\R{\mathbb{R}}
 \def\N{\mathbb{N}}
 \def\H{\mathbb{H}}
  \def\P{\mathbb{P}}
 \def\rC{\mathrm{C}}
  \def\d{\partial}
 \def\dbar{{\overline{\partial}}}
\def\dzbar{{\overline{dz}}}
 \def\ii{\mathrm{i}}
  \def\d{\partial}
 \def\dbar{{\overline{\partial}}}
\def\dzbar{{\overline{dz}}}
\def \ddbar {\partial \overline{\partial}}
\def\cK{\mathcal{K}}
\def\cE{\mathcal{E}}  \def\cO{\mathcal{O}}
\def\P{\mathbb{P}}
\def\cI{\mathcal{I}}
\def \loc{\mathrm{loc}}
\def \log{\mathrm{log}}
\def \cC{\mathcal{C}}
\bibliographystyle{plain}
\def \deg{\mathrm{deg}}
\def \nd{\mathrm{nd}}
\def \liminf{\mathrm{liminf}}
\def \ker{\mathrm{Ker}}
\maketitle
\begin{abstract}
Junyan Cao has obtained a very general vanishing theorem, valid on any compact K\"ahler manifold, for the cohomology groups with values in a pseudoeffective line bundle twisted by the associated multiplier ideal sheaf. In this note, we give a version of this result in terms of the numerical dimension of the line bundle, instead of the numerical dimension of a given singular metric, which can be smaller.
\end{abstract}

\section{Introduction}
In \cite{Cao17}, Junyan Cao has proven the following very general Kawamata-Viehweg-Nadel type vanishing theorem.
\begin{mythm}
Let $(L,h)$ be  a  pseudo-effective  line  bundle  on  a  compact K\"ahler $n$-dimensional manifold $X$.  Then 
$$H^q(X,K_X \otimes L \otimes \cI(h)) = 0$$ for every $q \geq n-\nd(L,h) + 1$.
\end{mythm}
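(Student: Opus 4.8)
The plan is to realize the cohomology group through $L^2$ Hodge theory and then run a Bochner--Kodaira--Nakano argument in which the numerical dimension controls the positivity of the curvature operator in high bidegree. First I would reduce to a situation with mild singularities: using Demailly's regularization of closed positive $(1,1)$-currents, I would replace $h$ by a quasi-equisingular decreasing approximation $(h_j)$ with analytic singularities and $i\Theta_{h_j}(L) \geq -\epsilon_j\omega$ with $\epsilon_j \downarrow 0$, arranged so that $\cI(h_j)$ recovers $\cI(h)$ in the limit and, crucially, so that the numerical dimension is preserved, $\nd(L,h_j) \to \nd(L,h)$. Passing to a log-resolution one can make the singularities a simple normal crossing divisor, so that away from an analytic set the metric is smooth with semipositive curvature of generic rank equal to $\nd(L,h)$.

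Next I would represent a class in $H^q(X, K_X \otimes L \otimes \cI(h))$ by an $L^2$ $\dbar$-harmonic $(n,q)$-form $u$ with values in $L$, the $L^2$ norm being taken with respect to $h$ and a complete K\"ahler metric on the smooth locus. On that locus the Bochner--Kodaira--Nakano identity yields the a priori inequality $\|\dbar u\|^2 + \|\dbar^* u\|^2 \geq \int \langle [i\Theta_h(L),\Lambda]u, u\rangle\, dV$, whose left-hand side vanishes for a harmonic representative. The key pointwise linear-algebra fact is that, where $i\Theta_h(L)$ is semipositive of rank $\geq \nd(L,h)$, the curvature endomorphism $[i\Theta_h(L),\Lambda]$ on $(n,q)$-forms is positive definite once $q \geq n-\nd(L,h)+1$: its least eigenvalue is the sum of the $q$ smallest eigenvalues of $i\Theta_h(L)$, and the $n-\nd(L,h)$ vanishing directions cannot absorb a $q$-element index set once $q > n-\nd(L,h)$. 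Combined with the vanishing of the energy this forces $u$ to vanish on the full-measure generic locus, hence $u \equiv 0$, so the class is trivial.

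The main obstacle is that the curvature is only semipositive and only generically of maximal rank, while the metric is genuinely singular, so neither the Bochner--Kodaira--Nakano machinery nor the pointwise positivity applies directly; one must control the degeneracy locus and the negative error $-\epsilon_j\omega$ while passing to the limit $j \to \infty$. Concretely, I expect to add a strictly positive perturbation $\epsilon\omega$ to restore positive-definiteness, solve the resulting $\dbar$-equation with H\"ormander $L^2$-estimates whose constants are governed by the curvature operator, and then let $\epsilon$ and $\epsilon_j$ tend to $0$; the delicate point is a uniform control of the estimates near the singular and degeneracy sets so that the limiting solution still lies in the correct $\cI(h)$-weighted space. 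Relating the analytic numerical dimension $\nd(L,h)$, defined through the nonpluripolar products $\langle (i\Theta_h(L))^p\rangle$, to the pointwise generic rank on the regularized model, and verifying that this rank is not lost under regularization, is where the heart of the argument and the comparison with Cao's original treatment will lie.
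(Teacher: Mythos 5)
Your proposal stalls exactly where the real proof begins, and it rests on a false premise. The premise is that, after regularization and a log-resolution, the metric is ``smooth with semipositive curvature of generic rank equal to $\nd(L,h)$'' away from an analytic set, so that the sum of the $q$ smallest curvature eigenvalues is pointwise positive for $q \geq n-\nd(L,h)+1$. But Cao's numerical dimension is not a pointwise rank: it is an asymptotic, intersection-theoretic quantity attached to the regularizations $T_\varepsilon \geq -\varepsilon\omega$, and nothing forces any regularization of the curvature current to have pointwise semipositive rank $\geq \nd(L,h)$ on any open set. The paper's own motivating example (Example 1.7 of \cite{DPS94}, discussed in the introduction and in Remark 1) illustrates the general phenomenon: the positivity of a curvature current can be entirely concentrated on a pluripolar set (there $i\Theta_{L,h}=[C]$), and outside such a set the Demailly approximations $h_j$ satisfy nothing better than $i\Theta_{h_j}(L)\geq -\epsilon_j\omega$ --- there are no positive directions for your linear-algebra step to act on, and the harmonic-representative Bochner argument yields nothing. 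This is not a defect you can repair by ``uniform control near the degeneracy set'': with your inputs the curvature operator term in the Bochner--Kodaira--Nakano inequality is simply not nonnegative, let alone positive definite.

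The missing idea --- the heart of Cao's proof \cite{Cao17} and of Proposition 1 of this paper --- is the mechanism converting the integrated positivity measured by the numerical dimension into pointwise eigenvalue bounds: one does not work with regularizations of the given curvature current at all, but uses Yau's theorem \cite{Yau78} to build, in the perturbed class of $\alpha+\delta\omega$, auxiliary quasi-psh potentials $\Phi_{\gamma,\delta}$ whose curvature eigenvalues $\lambda_{1,\delta}\leq\cdots\leq\lambda_{n,\delta}$ with respect to $\omega$ satisfy simultaneously $\lambda_{1,\delta}\geq \frac{\delta}{2}(1-\gamma)$ and the Monge--Amp\`ere bound $\lambda_{1,\delta}\cdots\lambda_{n,\delta}\geq a\gamma^n\delta^{n-p}$, $p$ the numerical dimension. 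Combined with the mass bound $\int_X \lambda_{n,\delta}\,dV_\omega \leq C$ (the cohomology class is fixed), these force the ratio $\rho_\delta=\delta/(\lambda_{1,\delta}+\cdots+\lambda_{q,\delta})$ to tend to $0$ outside sets of small measure precisely when $q\geq n-p+1$: the numerical dimension enters through volumes, never through ranks. One then does not take harmonic representatives but solves $f=\dbar v_\delta+w_\delta$ with an error term in the sense of Demailly--Peternell \cite{DP02}, the estimate on $\rho_\delta$ forcing $\Vert w_\delta\Vert\to 0$. Finally, the multiplier-ideal bookkeeping you dispatch in one clause (``arranged so that $\cI(h_j)$ recovers $\cI(h)$ in the limit'') is itself a principal difficulty: keeping $\cI(h)$ fixed under approximation while retaining curvature control requires Cao's equisingular approximation together with Guan--Zhou's strong openness theorem $\cI_+(h)=\cI(h)$ \cite{GZ13}; without that input one controls only the image of $H^q(X,K_X\otimes L\otimes\cI(h))$ in $H^q(X,K_X\otimes L)$, which is exactly why the present paper's Theorem 2 (stated for $\nd(L)$ rather than $\nd(L,h)$) asserts only that this map vanishes rather than the vanishing of the group itself.
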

The numerical dimension $\nd(L,h)$ used in Cao's theorem is the numerical dimension of the closed positive $(1,1)$-current $i \Theta_{L,h}$ defined in his paper. Since we will not need this definition, we refer to his paper for further information.
We just recall the remark on page 22 of \cite{Cao17}.
In the example 1.7 of \cite{DPS94}, they consider the nef line bundle $\cO(1)$ over the projectivisation of a rank 2 vector bundle over the elliptic curve $C$ which is the only non-trivial extension of $\cO_C$. They prove that there exists a unique positive singular metric $h$ on $\cO(1)$.
For this metric, $\nd(\cO(1),h)=0$.
But the numerical dimension of $\cO(1)$ is equal to 1. 
We recall that for a nef line bundle $L$ the numerical dimension is defined as
$$\nd(L):= \max \{p ; c_1(L)^p \neq 0\}.$$
We also remark that Cao’s technique of proof actually yields the result for the upper semi-continuous regularization of multiplier ideal sheaf defined as
$$\cI_+(h) := \lim_{\varepsilon \to 0}\cI(h^{1+\varepsilon})$$
instead of $\cI(h)$, but we can apply Guan-Zhou’s Theorem  \cite{GZ14a}\cite{GZ13} \cite{GZ15} \cite{GZ14b} to see that the equality $\cI_+(h) =\cI(h)$ always holds. 
In particular, by the Noetherian property of ideal sheaves, we have
$$\cI_+(h)=\cI(h^{\lambda_0} )=\cI(h)$$
for some $\lambda_0 >1$.
This fact will also be used in our result.

In this note, we prove the following version of Junyan Cao's vanishing theorem, following closely the ideas of Junyan Cao \cite{Cao17} and the version that was a bit simplified in \cite{Dem14}.
\begin{mythm}
Let $L$ be  a  pseudo-effective  line  bundle  on  a  compact K\"ahler $n$-dimensional manifold $X$.  Then the morphism induced by inclusion $K_X \otimes L \otimes \cI(h_{\min}) \to K_X \otimes L$
$$H^q(X,K_X \otimes L \otimes \cI(h_{\min})) \to H^q(X,K_X \otimes L )$$ is 0 map for every $q \geq n-\nd(L) + 1$.
\end{mythm}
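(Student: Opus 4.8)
The plan is to prove the equivalent analytic statement: every $\bar\partial$-closed $L$-valued $(n,q)$-form $u$ that is square-integrable with respect to $h_{\min}$, and which therefore represents a class in $H^q(X,K_X\otimes L\otimes\cI(h_{\min}))$ via Demailly's $L^2$-Dolbeault isomorphism, can be written as $u=\bar\partial v$ for some $v$ that is square-integrable merely with respect to a \emph{smooth} metric on $L$. Such a $v$ need not lie in the multiplier-ideal subcomplex, and this is exactly why the conclusion is the vanishing of the induced map rather than of the source group itself: the inclusion of complexes $L^2(h_{\min})\hookrightarrow L^2(h_{\mathrm{sm}})$ induces the map in the statement, and solving $\bar\partial v=u$ in the coarser complex kills the image of $[u]$. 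At the outset I would use the identity $\cI_+(h_{\min})=\cI(h_{\min})=\cI(h_{\min}^{\lambda_0})$ for some $\lambda_0>1$ recalled in the introduction, so that the representative $u$ carries a small amount of integrability room.

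The core is an approximation scheme in the spirit of Cao \cite{Cao17} and its streamlined version \cite{Dem14}. I would regularize $h_{\min}$ by Demailly's theorem to obtain metrics $h_\varepsilon$ with analytic singularities satisfying $i\Theta_{L,h_\varepsilon}\geq -\varepsilon\omega$ and $\cI(h_\varepsilon)=\cI(h_{\min})$ for $\varepsilon$ small, then pass to a modification $\mu_\varepsilon\colon \tilde X_\varepsilon\to X$ on which $\mu_\varepsilon^*(i\Theta_{L,h_\varepsilon})=[D_\varepsilon]+\beta_\varepsilon$ with $D_\varepsilon$ effective and $\beta_\varepsilon$ smooth and $\geq -\varepsilon\,\mu_\varepsilon^*\omega$. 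Away from the singular locus I would run the twisted Bochner--Kodaira--Nakano--Hörmander a priori inequality, choosing an auxiliary weight adapted to the $(n,q)$ bidegree, so as to solve $\bar\partial v_\varepsilon=u$ with an $L^2$ bound of the schematic form $\|v_\varepsilon\|^2\leq C\int_X\langle B_\varepsilon^{-1}u,u\rangle$, where $B_\varepsilon$ is the curvature endomorphism built from $\beta_\varepsilon+\varepsilon\omega$.

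The decisive point, and the place where the numerical dimension of the \emph{line bundle} replaces that of the metric, is the control of the right-hand side as $\varepsilon\to 0$. Here I would invoke Boucksom's description of $\nd(L)$ through positive intersection products: the hypothesis $q\geq n-\nd(L)+1$ is equivalent to $\langle c_1(L)^{\,n-q+1}\rangle\neq 0$, and the masses $\int_{\tilde X_\varepsilon}\beta_\varepsilon^{\wedge(n-q+1)}\wedge(\mu_\varepsilon^*\omega)^{q-1}$ converge to $\langle c_1(L)^{\,n-q+1}\rangle\cdot\{\omega\}^{q-1}$. It is exactly this strictly positive limit, available even when the curvature current of $h_{\min}$ degenerates, as in the example recalled in the introduction where $\nd(\cO(1),h)=0<1=\nd(\cO(1))$, that keeps the solution operator bounded in $n-q+1$ directions and lets the estimate survive the limit. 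The hard part will be precisely this uniform estimate: one must organize the Bochner inequality and the weight so that the positivity furnished by the \emph{global} intersection number $\langle c_1(L)^{\,n-q+1}\rangle$, rather than the pointwise rank of $i\Theta_{L,h_{\min}}$, dominates the error terms coming from the divisorial part $[D_\varepsilon]$ and from the negative part $-\varepsilon\omega$ of $\beta_\varepsilon$.

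Finally I would extract a weak $L^2$-limit $v$ of the $v_\varepsilon$ with respect to a fixed smooth metric on $L$, verify $\bar\partial v=u$, and push forward by $\mu_\varepsilon$ to descend to $X$; since $v$ is square-integrable for a smooth metric it defines a genuine element of the Dolbeault complex computing $H^{q}(X,K_X\otimes L)$, so the image of $[u]$ there vanishes. The role of $\cI_+(h_{\min})=\cI(h_{\min})$ is to ensure that no integrability is lost when passing between $u$ and its approximants, so that the argument applies to an arbitrary class in the source group and yields the asserted vanishing of the morphism for all $q\geq n-\nd(L)+1$.
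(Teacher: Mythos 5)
Your skeleton agrees with the paper's (represent the class by an $(n,q)$-form $f$ that is $L^2$ for $h_{\min}$, then show $f$ becomes $\overline{\partial}$-exact in the Dolbeault complex of a smooth metric, using $\mathcal{I}_+(h_{\min})=\mathcal{I}(h_{\min})$ and approximants with analytic singularities), but the two steps you leave open are precisely where your plan breaks. The decisive gap is the one you yourself flag as ``the hard part'': converting the positivity of the global intersection number $\langle c_1(L)^{n-q+1}\rangle$ into an estimate usable in the Bochner inequality. Mass convergence cannot do this. The Bochner--Kodaira method requires \emph{pointwise} lower bounds on the curvature eigenvalues $\lambda_{1,\delta}\le\cdots\le\lambda_{n,\delta}$, whereas a lower bound on $\int_{\tilde X_\varepsilon}\beta_\varepsilon^{\,n-q+1}\wedge(\mu_\varepsilon^*\omega)^{q-1}$ says nothing about where that mass sits; it may concentrate near the exceptional divisor and leave the eigenvalues degenerate on most of $X$. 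The paper's mechanism for this conversion is a Calabi--Yau step: after the log resolution, Yau's theorem is applied to produce a K\"ahler representative $\widehat\beta_\varepsilon+\frac{i}{2\pi}\partial\overline{\partial}\widehat\tau_\varepsilon$ whose Monge--Amp\`ere volume form is \emph{prescribed} to satisfy $(\widehat\beta_\varepsilon+\frac{i}{2\pi}\partial\overline{\partial}\widehat\tau_\varepsilon)^n\ge \frac{c}{3}\delta^{n-p}\mu^*\omega^n$ everywhere, i.e.\ the global bound $\int\beta_\varepsilon^n\ge c\,\delta^{n-p}\int_X\omega^n$ furnished by $\nd(L)=p$ is redistributed uniformly over the manifold. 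This is what yields the pointwise estimates $\lambda_{j,\delta}\ge\frac{\delta}{2}(1-\gamma_0)$ and $\prod_j\lambda_{j,\delta}\ge a\gamma_0^n\delta^{n-p}$, which in turn feed Mourougane's ratio estimate $\rho_\delta=\delta/(\lambda_{1,\delta}+\cdots+\lambda_{q,\delta})\le C\delta^{1-\frac{n-p+(n-q)\varepsilon}{q}}\to 0$ exactly when $q\ge n-p+1$. Without an analogue of this redistribution step your uniform estimate has no proof.

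The second gap is your endgame. You propose to solve $\overline{\partial}v_\varepsilon=u$ exactly and extract a weak $L^2$-limit of the $v_\varepsilon$; but with curvature bounded below only by $\frac{\delta}{2}(1-\gamma_0)\omega$, the solution bound degenerates like $1/\delta$, so the $v_\delta$ admit no uniform bound and no weak limit. The paper instead solves \emph{with an error term} (Demailly--Peternell): $f=\overline{\partial}v_\delta+w_\delta$ with $\Vert v_\delta\Vert_\delta^2+\frac{1}{\delta}\Vert w_\delta\Vert_\delta^2\le\int_X\frac{1}{\lambda_{1,\delta}+\cdots+\lambda_{q,\delta}}|f|^2e^{-\Phi_\delta}dV_\omega$, shows $\Vert w_\delta\Vert\to 0$ in $L^2(h_\infty)$ (via property (e) of the key proposition, H\"older, and dominated convergence), and then concludes not by a limit of the $v_\delta$ but by a functional-analytic argument: $f$ lies in the closure of the coboundary space of the smooth-metric complex, and since $H^q(X,K_X\otimes L)$ is finite-dimensional Hausdorff that space is closed, so $f$ is exact. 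A minor further inaccuracy: Demailly regularization does not give you metrics with analytic singularities, curvature $\ge-\varepsilon\omega$, \emph{and} $\mathcal{I}(h_\varepsilon)=\mathcal{I}(h_{\min})$ simultaneously (equisingular approximations lack analytic singularities); the paper only uses $\psi_\varepsilon\ge\varphi_{\min}$ together with the openness theorem to get the weaker, but sufficient, inclusion $\mathcal{I}(\varphi_{\min})\subset\mathcal{I}(\Phi_{\gamma,\delta})$.
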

\begin{myrem}
{\rm
In the example 1.7 of \cite{DPS94}, since the rank 2 vector bundle is the only non-trivial extension of $\cO_C$, there exists a surjective morphism from this vector bundle to $\cO_C$ which induces a closed immersion $C$ into the ruled surface.
The only positive metric on $\cO(1)$ has curvature $[C]$ the current associated to $C$.
On the other hand, $\cO(1)=\cO(C)$. 
So we have
$H^2(X, K_X \otimes \cO(1))=H^0(X, \cO(-1))=H^0(X, \cO(-C))=0$
and $H^2(X, K_X \otimes \cO(1) \otimes \cI(h_{\min}))=H^2(X, K_X \otimes \cO(1) \otimes \cO(-C))=H^0(X, \cO_X)=\C$.
This shows that to get a numerical dimension version of theorem the best that we can hope for is that the morphism is 0 map instead of that $H^q(X, K_X \otimes L \otimes \cI(h_{\min}))=0$.
We notice that in general one would expect the vanishing result
$$H^q(X, K_X \otimes L)=0$$
for $q \geq n - \nd(L)+1$, whenever $L$ is a nef line bundle. Here the difficulty is to prove a general K\"ahler version, since the results follows easily from an inductive hyperplane section argument when $X$ is projective (cf. eg. Corollary (6.26) of \cite{Dem12}).
}
\end{myrem}
By remark 1 in \cite{Wu19}, we can even assume that $h_{\varepsilon}$ as stated in the definition of the numerical dimension is increasing to $h_{\min}$ as $\varepsilon \to 0$.
What we need here is that the weight functions $\varphi_{\varepsilon}$ has limit $\varphi_{\min}$ and is pointwise at least equal to $\varphi_{\min}$ with a universal upper bound on $X$.

Before giving the proof of the vanishing theorem, we give the general lines of the ideas and compare it with Cao's theorem.
The idea is using the $L^2$ resolution of the multiplier ideal sheaf and proving that every $\dbar$-closed $L^2(h_{\min})$ global section can be approximated by $\dbar$-exact $L^2(h_{\infty})$ global sections with $h_\infty$ some smooth reference metric on $L$.
To prove it, we solve the $\dbar$-equation using a Bochner technique with error term (as in \cite{DP02}), and we prove that the error term tends to 0.

For this propose, we need to estimate the curvature asymptotically by some special approximating hermitian metrics constructed by means of the Calabi-Yau theorem.
Cao tried to prove that the error term tends to 0 in the topology induced by the $L^2$-norm, with respect to the given singular metric. In this way, he tried to keep the multiplier ideal sheaf unchanged when approximating the singular metric, by means of suitable ``equisingular approximation''.
For our propose, we try to prove that the error term tends to 0 in the topology induced by $L^2$-norm with respect to some (hence any) smooth metric. 
It would be enough for us that the multiplier ideal sheaf of $h_{\min}$is included in the multiplier ideal sheaf of the approximating hermitian metric.
In some sense, Cao's theorem is more precise in studying the singularity of the metric which somehow explains why his approach works for any singular metric while our approach applies only for the image of the natural inclusion.  
\section{Proof of the main theorem}
We start the proof of the vanishing theorem by the following technical curvature and singularity estimate.
\begin{myprop}
Let $(L,h_{\min})$ be a pseudo-effective line bundle on a compact K\"ahler manifold $(X,\omega)$. Let us write $T_{\min}=\frac{i}{2\pi}\Theta_{L,h_{\min}}=\alpha+\frac{i}{2\pi}\d \dbar \varphi_{\min}$ where $\alpha$ is the curvature of some smooth metric $h_{\infty}$ on $L$ and $\varphi_{\min}$ is a quasi-psh potential. Let $p=\nd(L)$ be the numerical dimension of $L$. Then, for every $\gamma\in{}]0,1]$ and $\delta\in{}]0,1]$, there exists a quasi-psh potential $\Phi_{\gamma,\delta}$ on $X$ satisfying the following properties$\,:$
\item{\rm(a)} $\Phi_{\gamma,\delta}$ is smooth in the complement $X\setminus Z_{\delta}$ of an analytic set $Z_{\delta}\subset X$.
\vskip2pt
\item{\rm(b)} $\alpha+\delta\omega+\frac{i}{2 \pi} \d \dbar \Phi_{\gamma,\delta}\geq \frac{\delta}{2}(1-\gamma)\omega$ on $X$.
\vskip2pt
\item{\rm(c)} $(\alpha+\delta\omega+\frac{i}{2 \pi} \d \dbar\Phi_{\gamma,\delta})^n\geq a\,\gamma^n\delta^{n-p}\omega^n$ on $X\setminus Z_\delta$.
\vskip2pt
\item{\rm(d)} $\sup_X\Phi_{1,\delta}=0$, and for all $\gamma\in{}]0,1]$ there are estimates $\Phi_{\gamma,\delta}\le A$ and
$$\exp\big(-\Phi_{\gamma,\delta}\big)\le 
e^{-(1+b\delta)\varphi_{\min}}\exp\big(A-\gamma\Phi_{1,\delta}\big)$$
\item{\rm(e)} For $\gamma_0,\,\delta_0>0$ small,
$\gamma\in{}]0,\gamma_0]$, $\delta\in{}]0,\delta_0]$, we have
$$\cI_+(\varphi_{\min})=\cI(\varphi_{\min})\subset \cI(\Phi_{\gamma,\delta}).$$
Here $a,\,b,\,A,\,\gamma_0,\,\delta_0$ are suitable constants independent of $\gamma$, $\delta$.
\end{myprop}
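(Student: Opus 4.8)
The plan is to reduce the whole statement to the extremal case $\gamma=1$ and then recover arbitrary $\gamma\in(0,1]$ by a convex interpolation against the minimal-singularities potential $\varphi_{\min}$. For $\gamma=1$ the goal is to produce a single potential $\Phi_{1,\delta}$ whose associated form $\beta_{1,\delta}:=\alpha+\delta\omega+\frac{i}{2\pi}\d\dbar\Phi_{1,\delta}$ is a K\"ahler current with analytic singularities along an analytic set $Z_\delta$, smooth and positive on $X\setminus Z_\delta$, normalised by $\sup_X\Phi_{1,\delta}=0$, and satisfying the Monge--Amp\`ere lower bound $\beta_{1,\delta}^n\ge a\,\delta^{n-p}\omega^n$ on $X\setminus Z_\delta$. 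Such an object already encodes (a), the normalisation in (d), and the cases $\gamma=1$ of (b)--(c). The exponent $n-p$ is dictated by the numerical dimension: since $L$ is pseudo-effective and $\omega$ K\"ahler, the class $c_1(L)+\delta\{\omega\}$ is big, and the definition of $p=\nd(L)$ through positive intersection products yields the volume estimate $\mathrm{vol}\big(c_1(L)+\delta\{\omega\}\big)\ge a_0\,\delta^{n-p}$ for all small $\delta>0$, which is exactly the mass we must realise pointwise.

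To build $\Phi_{1,\delta}$ I would use a Demailly regularisation / Fujita approximation of the big class: pass to a modification $\mu:\hat X\to X$ on which $\mu^\ast(\alpha+\delta\omega)$ splits as a K\"ahler class $\{\hat\beta_\delta\}$ plus an effective divisor, with $\int_{\hat X}\hat\beta_\delta^{\,n}\ge a_1\delta^{n-p}$. On $\hat X$ the Calabi--Yau theorem furnishes a genuine K\"ahler metric $\hat\omega_\delta\in\{\hat\beta_\delta\}$ solving $\hat\omega_\delta^{\,n}=c_\delta\,dV$ for a fixed smooth positive volume form $dV$, with $c_\delta\ge a_2\delta^{n-p}$; pushing $\hat\omega_\delta$ forward and restoring the effective part gives the required current on $X$, smooth on the biholomorphic locus and singular exactly along $Z_\delta=\mu(\text{exceptional}\cup\text{effective})$. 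This second step is the \textbf{main obstacle}: one needs not merely the correct total Monge--Amp\`ere mass but a genuine \emph{pointwise} lower bound outside $Z_\delta$ capturing the full $\delta^{n-p}$ growth, which forces one to solve a (possibly degenerate) complex Monge--Amp\`ere equation and to control the comparison of $\mu_\ast dV$ with $\omega^n$ near $Z_\delta$, while keeping the singularities of $\Phi_{1,\delta}$ analytic and compatible with those of $\varphi_{\min}$.

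With $\Phi_{1,\delta}$ at hand I would set, for general $\gamma\in(0,1]$,
\[
\Phi_{\gamma,\delta}:=\gamma\,\Phi_{1,\delta}+(1-\gamma)(1+b\delta)\,\varphi_{\min}+C_{\gamma,\delta},
\]
with $b>0$ small and $C_{\gamma,\delta}$ a normalising constant. Since $\frac{i}{2\pi}\d\dbar\varphi_{\min}=T_{\min}-\alpha$ and $T_{\min}\ge0$, a direct computation rewrites $\beta_{\gamma,\delta}:=\alpha+\delta\omega+\frac{i}{2\pi}\d\dbar\Phi_{\gamma,\delta}$ as $\gamma\,\beta_{1,\delta}+(1-\gamma)(1+b\delta)T_{\min}+(1-\gamma)\delta\omega-(1-\gamma)b\delta\,\alpha$; bounding $\alpha\ge-M\omega$ and taking $b\le 1/(2M)$ produces the lower bound $\tfrac{\delta}{2}(1-\gamma)\omega$ of (b). As every summand is then nonnegative we have $\beta_{\gamma,\delta}\ge\gamma\beta_{1,\delta}\ge0$, so monotonicity of the determinant on positive Hermitian forms gives $\beta_{\gamma,\delta}^n\ge\gamma^n\beta_{1,\delta}^n\ge a\,\gamma^n\delta^{n-p}\omega^n$ on $X\setminus Z_\delta$, which is (c); the inequality of (d) is immediate from the defining formula once $\varphi_{\min}\le0$ is arranged, and the constant $A$ absorbs $C_{\gamma,\delta}$ and the bounded error $\gamma(1+b\delta)\varphi_{\min}\le 0$.

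Finally, for (e) I would invoke Guan--Zhou's strong openness theorem, giving $\cI(\varphi_{\min})=\cI\big((1+\eta)\varphi_{\min}\big)$ for some $\eta>0$ and hence extra local integrability of $|f|^2e^{-(1+b\delta)\varphi_{\min}}$. Combined with the analytic (divisorial) nature of the singularities of $\Phi_{1,\delta}$, so that $e^{-\gamma\Phi_{1,\delta}}\in L^{q}_{\loc}$ for $q$ as large as desired once $\gamma$ is small, a H\"older inequality shows $|f|^2e^{-\Phi_{\gamma,\delta}}\in L^1_{\loc}$, that is $\cI(\varphi_{\min})\subset\cI(\Phi_{\gamma,\delta})$, provided $\gamma\le\gamma_0$ and $\delta\le\delta_0$ are small enough to keep the exponents below $1+\eta$. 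This fixes the thresholds $\gamma_0,\delta_0$ and, together with $a,b,A$ from the previous steps, gives the constants claimed to be independent of $\gamma$ and $\delta$.
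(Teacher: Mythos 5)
Your step 1 (resolution of a K\"ahler current in the big class $c_1(L)+\delta\{\omega\}$, volume bound $\delta^{n-p}$ from the numerical dimension, and Yau's theorem to spread that mass into a pointwise Monge--Amp\`ere lower bound) is essentially the paper's construction, and your verifications of (b) and of the exponential inequality in (d) are correct as algebra. The fatal problem is your interpolation step. You set $\Phi_{\gamma,\delta}=\gamma\,\Phi_{1,\delta}+(1-\gamma)(1+b\delta)\varphi_{\min}+C_{\gamma,\delta}$, so that $\Phi_{\gamma,\delta}$ contains the minimal-singularity potential $\varphi_{\min}$ itself. But $\varphi_{\min}$ is a general quasi-psh envelope: it has no analytic singularities and is in general not smooth (nor even locally bounded) outside \emph{any} analytic subset of $X$. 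Hence your $\Phi_{\gamma,\delta}$ violates (a) for every $\gamma<1$, and (c) is then not even well defined as stated, since on $X\setminus Z_\delta$ the form $\alpha+\delta\omega+\frac{i}{2\pi}\d\dbar\Phi_{\gamma,\delta}$ is no longer smooth with pointwise eigenvalues; at best its $n$-th power is a Bedford--Taylor or non-pluripolar product. This cannot be waived: smoothness outside an analytic set is exactly what the Bochner--Kodaira inequality and the pointwise eigenvalue estimates $\lambda_{1,\delta}\cdots\lambda_{n,\delta}\ge a\gamma_0^n\delta^{n-p}$ in the proof of the main theorem require. The paper avoids this by never letting $\varphi_{\min}$ enter the formula: it interpolates using the approximations $\psi_{\varepsilon}\ge\varphi_{\min}$ \emph{with analytic singularities} furnished by the definition of $\nd(L)$, setting $\Phi_{\gamma,\delta}=(1+b\delta)\psi_{\varepsilon}+\gamma(\theta_\varepsilon+\tau_\varepsilon)$, where $\theta_\varepsilon+\tau_\varepsilon$ is the Calabi--Yau correction coming from the resolution; only that correction is scaled by $\gamma$, and the comparison with $\varphi_{\min}$ needed in (d) follows from the pointwise inequality $\psi_\varepsilon\ge\varphi_{\min}$ rather than from an explicit $\varphi_{\min}$ term. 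Repairing your scheme forces you to replace $\varphi_{\min}$ by such a regularization with analytic singularities, i.e.\ it leads back to the paper's proof.

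A second, more repairable gap is in (e). You justify $e^{-q\gamma\Phi_{1,\delta}}\in L^1_{\loc}$ by the divisorial nature of the singularities of $\Phi_{1,\delta}$ ``once $\gamma$ is small''; but the resulting threshold depends on the Lelong numbers of $\Phi_{1,\delta}$, hence on $\delta$ (nothing bounds the divisor coefficients produced by the resolution uniformly as $\delta\to0$), whereas the proposition requires $\gamma_0$ independent of $\delta$. The paper obtains uniformity by observing that the family $\{\Phi_{1,\delta}\}$ is compact among quasi-psh functions (normalization $\sup_X\Phi_{1,\delta}=0$ and uniform bound $\alpha+\omega+\frac{i}{2\pi}\d\dbar\Phi_{1,\delta}\ge0$), so the uniform Skoda integrability theorem of Guedj--Zeriahi gives a single $c_0>0$ with $\int_X e^{-c_0\Phi_{1,\delta}}dV_\omega$ bounded for all $\delta\in{}]0,1]$, and then $\gamma_0:=c_0/q$ works; your argument needs this same device. (A trivial slip besides: to absorb the term $-(1-\gamma)b\delta\alpha$ in (b) you need an upper bound $\alpha\le M\omega$, not a lower bound $\alpha\ge-M\omega$.)
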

\begin{proof}
Denote by $\psi_{\varepsilon}$ the (non-increasing) sequence of weight functions as stated in the definition of numerical dimension.
We have $\psi_{\varepsilon}\ge\varphi_{\min}$ for all $\varepsilon >0$, the $\psi_{\varepsilon}$ have analytic singularities and 
$$\alpha+\frac{i}{2 \pi}\d\dbar \psi_{\varepsilon}\ge-\varepsilon \omega.$$
Then for $\varepsilon\le\frac{\delta}{4}$, we have
\begin{align*}
   \alpha+\delta\omega+\frac{i}{2\pi}\d \dbar \big((1+b\delta)\psi_{\varepsilon}\big)  &\geq \alpha+\delta\omega-(1+b\delta)(\alpha+\varepsilon\omega) \\
   &\ge\delta\omega-(1+b\delta)\varepsilon\omega-b\delta\alpha\ge
{\textstyle\frac{\delta}{2}}\omega
\end{align*}
for $b \in ]0, \frac{1}{5}]$ small enough such that $\omega -b \alpha \ge 0$.

Let $\mu:\widehat X\to X$ be a log-resolution of $\psi_{\varepsilon}$, so that 
$$\mu^*\big(\alpha+\delta\omega+\frac{i}{2 \pi} \d \dbar ((1+b\delta)\psi_{\varepsilon})\big)=[D_{\varepsilon}]+\beta_{\varepsilon}$$
where $\beta_{\varepsilon}\ge\frac{\delta}{2}\mu^*\omega\ge 0$ is a smooth closed $(1,1)$-form on $\widehat X$ that is strictly positive in the complement $\widehat X\setminus E$ of the exceptional divisor, and $D_{\varepsilon}$ is an effective $\R$-divisor that includes all components $E_\ell$ of $E$. The map $\mu$ can be obtained by Hironaka \cite{Hir64} as a composition of a sequence of blow-ups with smooth centres, and we can even achieve that $D_{\varepsilon}$ and $E$ are normal crossing divisors.
For arbitrary small enough numbers $\eta_\ell>0$, $\beta_\varepsilon-\sum\eta_\ell[E_\ell]$ is a K\"ahler class on~$\widehat X$.
Hence we can find a quasi-psh potential $\widehat\theta_{\varepsilon}$ on $\widehat X$ such that $\widehat\beta_{\varepsilon}:=\beta_\varepsilon-\sum\eta_\ell[E_\ell]+\frac{i}{2 \pi} \d \dbar \widehat\theta_\varepsilon$ is a K\"ahler metric on~$\widehat X$.
By taking the $\eta_\ell$ small enough, we may assume that
$$\int_{\widehat X}(\widehat\beta_\varepsilon)^n\ge
\frac{1}{2}\int_{\widehat X}\beta_\varepsilon^n.$$
We will use Yau's theorem \cite{Yau78} to construct a form in the cohomology class of $\widehat{\beta_\varepsilon}$ with better volume estimate.
We have
\begin{align*}
\alpha+\delta\omega+\frac{i}{2 \pi}\d \dbar\big((1+b\delta)\psi_{\varepsilon}\big)
&\ge\alpha+\varepsilon \omega+\frac{i}{2 \pi}\d \dbar \psi_{\varepsilon}+(\delta-\varepsilon)\omega
-b\delta(\alpha+\varepsilon\omega)\\
&\ge(\alpha+\varepsilon \omega+\frac{i}{2 \pi} \d \dbar \psi_{\varepsilon})+\frac{\delta}{2}\omega.
\end{align*}
The assumption on the numerical dimension of $L$ implies the existence of a constant $c>0$ such that, with $Z=\mu(E)\subset X$, we have
\begin{align*}
\int_{\widehat X}\beta_\varepsilon^n&=
\int_{X\setminus Z}\big(\alpha+\delta\omega+\frac{i}{2 \pi} \d \dbar ((1+b\delta)\psi_{\varepsilon})\big)^n \\
&\ge{n\choose p}\Big(\frac{\delta}{2}\Big)^{n-p}\int_{X\setminus Z}
\big(\alpha+\varepsilon\omega+\frac{i}{2\pi}\psi_{\varepsilon}\big)^p\wedge\omega^{n-p}
\ge  c\delta^{n-p}\int_X\omega^n.
\end{align*}
Therefore, we may assume
$$\int_{\widehat X}(\widehat\beta_\varepsilon)^n\ge \frac{c}{2}\,\delta^{n-p}\int_X\omega^n.$$
We take $\widehat{f}$ a volume form on $\widehat{X}$ such that $\widehat f>\frac{c}{3}\delta^{n-p}\mu^*\omega^n$ everywhere on $\widehat X$ and such that $\int_{\widehat X} \widehat f=\int_{\widehat X}\widehat\beta_\varepsilon^n$.
By Yau's theorem \cite{Yau78}, there exists a quasi-psh potential $\widehat\tau_\varepsilon$ on $\widehat X$ such that 
\hbox{$\widehat\beta_\varepsilon+\frac{i}{2 \pi} \d \dbar \widehat\tau_\varepsilon$} is a K\"ahler metric on $\widehat X$ with the prescribed volume form $\widehat f>0$.

Now push our focus back to $X$. Set $\theta_\varepsilon=\mu_*\widehat\theta_\varepsilon$ and
$\tau_\varepsilon=\mu_*\widehat\tau_\varepsilon\in L^1_{loc}(X)$.
We define
$$\Phi_{\gamma,\delta}:=(1+b\delta)\psi_{\varepsilon}+\gamma(\theta_\varepsilon+\tau_\varepsilon).$$
By construction it is smooth in the complement $X\setminus Z_{\delta}$ i.e. property (a).
It satisfies
$$\mu^*\big(\alpha+\delta\omega+\frac{i}{2 \pi} \d \dbar ((1+b\delta)\psi_{\varepsilon}+
\gamma(\theta_\varepsilon+\tau_\varepsilon))\big)
=[D_\varepsilon]+(1-\gamma)\beta_\varepsilon+\gamma\Big(
\sum_\ell\eta_\ell[E_\ell]+\widehat\beta_\varepsilon+\frac{i}{2\pi} \d \dbar \widehat\tau_\varepsilon\Big)$$
$$\ge(1-\gamma)\beta_\varepsilon \ge\frac{\delta}{2}(1-\gamma)\,\mu^*\omega$$
since \hbox{$\widehat\beta_\varepsilon+\frac{i}{2 \pi} \d \dbar \widehat\tau_\varepsilon$} is a K\"ahler metric on $\widehat X$.
Thus the property (b) is satisfied.
Putting $Z_\delta=\mu(|D_\varepsilon|)\supset \mu(E)=Z$, we have on $X \setminus Z_\delta$
$$
\mu^* \big(\alpha+\delta\omega+\frac{i}{2 \pi}\d \dbar \Phi_{\gamma,\delta}\big)^n \ge \big(\beta_\varepsilon+\gamma\frac{i}{2 \pi} \d \dbar(\widehat \theta_\varepsilon+ \widehat\tau_\varepsilon) \big)^n
$$$$\ge\gamma^n\,(\widehat\beta_\varepsilon+\frac{i}{2 \pi} \d \dbar \widehat\tau_\varepsilon)^n\ge \frac{c}{3}\,\gamma^n\delta^{n-p}\mu^*\omega^n.
$$
Since $\mu: \widehat{X} \setminus D_\varepsilon \to X \setminus Z_\delta$ is a biholomorphism, the condition (c) is satisfied if we set $a=\frac{c}{3}$.

We adjust constants in $\widehat\theta_\varepsilon+\widehat\tau_\varepsilon$ so that $\sup_X\Phi_{1,\delta}=0$.
Since $\varphi_{\min}\le\psi_{\varepsilon}\le\psi_{\varepsilon_0}\le A_0:=\sup_X\psi_{\varepsilon_0}$ for $\varepsilon \leq \varepsilon_0$ and
$$
\Phi_{\gamma,\delta}=(1+b\delta)\psi_{\varepsilon}+\gamma
\big(\Phi_{1,\delta}-\psi_{\varepsilon}\big)\geq 
(1+b\delta)\varphi_{\min}+\gamma\Phi_{1,\delta}
-\gamma A_0$$
and we have $\Phi_{\gamma,\delta}\le(1-\gamma+b\delta)A_0$.
Thus the property (d) is satisfied if we set $A:= (1+b)A_0$.

We observe that $\Phi_{1,\delta}$ satisfies $\alpha+\omega+dd^c\Phi_{1,\delta}\ge 0$ and $\sup_X\Phi_{1,\delta}=0$, hence $\Phi_{1,\delta}$ belongs to a compact 
family  of quasi-psh functions. 
By theorem 2.50 a uniform version of Skoda’s integrability theorem in \cite{GZ17}, there exists a uniform small constant $c_0>0$ such that $\int_X
\exp(-c_0\Phi_{1,\delta})dV_\omega<+\infty$ for all $\delta\in{}]0,1]$.
If $f\in\cO_{X,x}$ is a germ of holomorphic function and $U$ a small
neighbourhood of~$x$, the H\"older inequality combined with estimate~(d)
implies
$$
\int_U|f|^2\exp(-\Phi_{\gamma,\delta})dV_\omega\le
e^A\Big(\int_U|f|^2e^{-p(1+b\delta)\varphi_{\min}}dV_\omega\Big)^{\frac{1}{p}}
\Big(\int_U|f|^2e^{-q\gamma\Phi_{1,\delta}}dV_\omega\Big)^{\frac{1}{q}}.
$$
Take $p\in{}]1,\lambda_0[$ (say $p=(1+\lambda_0)/2$),  and take
$$\gamma\le\gamma_0:=\frac{c_0}{q}=c_0\frac{\lambda_0-1}{\lambda_0+1}\quad
\hbox{and}\quad\hbox{$\delta \le\delta_0\in{}]0,1]$ so small that 
$p(1+b\delta_0)\le\lambda_0$.}$$
Then  $f\in\cI_+(\varphi_{\min})=\cI(\lambda_0\varphi_{\min})$ implies $f\in\cI(\Phi_{\gamma,\delta})$ which proves the condition (e).
\end{proof}
The rest of the proof follows from the proof of \cite{Cao17} (cf. also \cite{Dem14}, \cite{DP02}, \cite{Mou}).
We will just give an outline of the proof for completeness.

Let $\{f\}$ be a cohomology class in the group $H^q(X,K_X\otimes L\otimes\cI(h_{\min}))$, $q\ge n-\nd(L)+1$. 
The sheaf $\cO(K_X\otimes L)\otimes\cI(h_{\min})$ can be resolved by the complex $(K^{\bullet},\dbar)$ where $K^i$ is the sheaf of $(n,i)$-forms $u$ such that both $u$ and $\dbar u$ are locally $L^2$ with respect to the weight $\varphi_{\min}$. 
So $\{f\}$ can be represented by a $(n,q)$-form $f$ such that both $f$ and $\dbar f$ are $L^2$ with respect to the weight $\varphi_{\min}$,
i.e.\ $\int_X|f|^2\exp(-\varphi_{\min})dV_\omega<+\infty$ and
$\int_X|\dbar f|^2\exp(-\varphi_{\min})dV_\omega<+\infty$. 

We can also equip $L$ by the hermitian metric $h_\delta$ defined by the quasi-psh weight $\Phi_\delta=\Phi_{\gamma_0,\delta}$ obtained in Proposition 1, with $\delta\in{}]0,\delta_0]$.
Since $\Phi_\delta$ is smooth on $X\setminus Z_\delta$, the Bochner-Kodaira inequality shows that for 
every smooth $(n,q)$-form $u$ with values in $K_X\otimes L$ that is 
compactly supported on $X\setminus Z_\delta$, we have
$$\Vert\dbar u\Vert_\delta^2+\Vert\dbar^* u\Vert_\delta^2\ge
2\pi\int_X(\lambda_{1,\delta}+\ldots+\lambda_{q,\delta}-q\delta)|u|^2e^{-\Phi_\delta}dV_\omega,$$
where $\Vert u\Vert_\delta^2:=\int_X|u|^2_{\omega,h_\delta}dV_\omega=
\int_X|u|_{\omega, h_{\infty}}^2e^{-\Phi_\delta}dV_\omega$.
The condition (b) of Proposition 1 shows that
$$
0<\frac{\delta}{2}(1-\gamma_0) \leq \lambda_{1,\delta}(x)\le\ldots\le\lambda_{n,\delta}(x)
$$
where $\lambda_{i, \delta}$ are at each point $x\in X$, the eigenvalues of $\alpha+\delta\omega+\frac{i}{2 \pi} \d \dbar \Phi_\delta$ with respect to the base K\"ahler metric~$\omega$.
In other words, we have up to a multiple $2 \pi$
$$
\Vert\dbar u\Vert_\delta^2+\Vert\dbar^* u\Vert_\delta^2+\delta\Vert u\Vert_\delta^2\ge
\int_X(\lambda_{1,\delta}+\ldots+\lambda_{q,\delta})|u|_{\omega, h_{\infty}}^2e^{-\Phi_\delta}dV_\omega.
$$
By the proof of theorem 3.3 in \cite{DP02}, we have the following lemma:
\begin{mylem}
For every $L^2$ section of $\Lambda^{n,q}T^*_X
\otimes L$ such that $\Vert f\Vert_\delta<+\infty$ and $\dbar f=0$ in the sense of distributions, there exists a $L^2$ section $v=v_\delta$ of $\Lambda^{n,q-1}T^*_X\otimes L$ and a $L^2$ section $w=w_\delta$ of $\Lambda^{n,q}T^*_X\otimes L$ such that $f=\dbar v+w$ with
$$
\Vert v\Vert_\delta^2+\frac{1}{\delta}\Vert w\Vert_\delta^2\le
\int_X\frac{1}{\lambda_{1,\delta}+\ldots+\lambda_{q,\delta}}|f|^2e^{-\Phi_\delta}dV_\omega.
$$
\end{mylem}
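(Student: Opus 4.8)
The plan is to run the standard H\"ormander--Andreotti--Vesentini $L^2$ machinery in the twisted, Bochner-with-error-term form of \cite{DP02}. I would work on the Zariski open set $X\setminus Z_\delta$, equip the space $H^{n,q}$ of $L^2$ sections of $\Lambda^{n,q}T^*_X\otimes L$ with the norm $\Vert\cdot\Vert_\delta$, and regard $\dbar$ and its Hilbert space adjoint $\dbar^*$ as closed, densely defined operators. Write $\Lambda:=\lambda_{1,\delta}+\ldots+\lambda_{q,\delta}$, which is $>0$ by condition (b) of Proposition 1, and set $C:=\int_X\frac{1}{\Lambda}|f|^2e^{-\Phi_\delta}dV_\omega$, which we may assume finite (otherwise there is nothing to prove).

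First I would upgrade the pointwise Bochner--Kodaira inequality, which is stated only for smooth $u$ compactly supported in $X\setminus Z_\delta$, to the a priori estimate
$$\int_X\Lambda\,|u|^2e^{-\Phi_\delta}dV_\omega\le\Vert\dbar u\Vert_\delta^2+\Vert\dbar^*u\Vert_\delta^2+\delta\Vert u\Vert_\delta^2$$
valid for every $u\in\mathrm{Dom}(\dbar)\cap\mathrm{Dom}(\dbar^*)$. This is the density step, and I expect it to be the main technical obstacle$\,:$ since $Z_\delta$ is analytic, $X\setminus Z_\delta$ carries a complete K\"ahler metric, so one runs the estimate against a family $\omega+\eta\omega'$ of complete metrics, uses that compactly supported smooth forms are dense in the graph norm for complete metrics, and passes to the limit $\eta\to0$ by monotone convergence (the eigenvalue sum $\Lambda$ being computed with respect to the fixed $\omega$ throughout). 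This is exactly the argument carried out in the proof of Theorem 3.3 of \cite{DP02}.

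Next I would feed closed forms into this estimate. For $\alpha\in\ker\dbar\cap\mathrm{Dom}(\dbar^*)$ the term $\Vert\dbar\alpha\Vert_\delta$ drops out, and Cauchy--Schwarz applied to the pointwise pairing, with the integrand split as $\Lambda^{-1/2}|f|\cdot\Lambda^{1/2}|\alpha|$, gives
$$|\langle f,\alpha\rangle_\delta|^2\le C\int_X\Lambda\,|\alpha|^2e^{-\Phi_\delta}dV_\omega\le C\big(\Vert\dbar^*\alpha\Vert_\delta^2+\delta\Vert\alpha\Vert_\delta^2\big).$$
To drop the hypothesis $\alpha\in\ker\dbar$, I would decompose an arbitrary $\alpha\in\mathrm{Dom}(\dbar^*)$ orthogonally as $\alpha=\alpha_1+\alpha_2$ with $\alpha_1\in\ker\dbar$ and $\alpha_2\in(\ker\dbar)^\perp\subset\ker\dbar^*$. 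Since $f\in\ker\dbar$ we have $\langle f,\alpha\rangle_\delta=\langle f,\alpha_1\rangle_\delta$, while $\dbar^*\alpha=\dbar^*\alpha_1$ and $\Vert\alpha_1\Vert_\delta\le\Vert\alpha\Vert_\delta$, so the bound
$$|\langle f,\alpha\rangle_\delta|^2\le C\big(\Vert\dbar^*\alpha\Vert_\delta^2+\delta\Vert\alpha\Vert_\delta^2\big)$$
now holds for \emph{every} $\alpha\in\mathrm{Dom}(\dbar^*)$.

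Finally I would solve by duality. On the subspace $\{(\dbar^*\alpha,\sqrt{\delta}\,\alpha):\alpha\in\mathrm{Dom}(\dbar^*)\}$ of $H^{n,q-1}\oplus H^{n,q}$ the assignment $(\dbar^*\alpha,\sqrt{\delta}\,\alpha)\mapsto\langle f,\alpha\rangle_\delta$ is well defined (the second coordinate determines $\alpha$) and of norm at most $\sqrt{C}$ by the last inequality. Hahn--Banach together with the Riesz representation theorem then produce $v\in H^{n,q-1}$ and $w_0\in H^{n,q}$ with $\Vert v\Vert_\delta^2+\Vert w_0\Vert_\delta^2\le C$ and
$$\langle f,\alpha\rangle_\delta=\langle v,\dbar^*\alpha\rangle_\delta+\sqrt{\delta}\,\langle w_0,\alpha\rangle_\delta\qquad\text{for every }\alpha\in\mathrm{Dom}(\dbar^*).$$
Setting $w:=\sqrt{\delta}\,w_0$, the identity $\langle f-w,\alpha\rangle_\delta=\langle v,\dbar^*\alpha\rangle_\delta$ for all such $\alpha$ says precisely that $v\in\mathrm{Dom}(\dbar)$ with $\dbar v=f-w$ in the sense of distributions, and $\Vert v\Vert_\delta^2+\frac{1}{\delta}\Vert w\Vert_\delta^2=\Vert v\Vert_\delta^2+\Vert w_0\Vert_\delta^2\le C$. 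This yields the decomposition $f=\dbar v+w$ with the asserted estimate.
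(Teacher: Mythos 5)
Your proposal is correct and takes essentially the same approach as the paper: the paper's entire proof of this lemma is the citation ``by the proof of Theorem 3.3 in \cite{DP02}'', and what you write out is precisely that H\"ormander--Andreotti--Vesentini duality argument with error term (complete-metric approximation on $X\setminus Z_\delta$, a priori estimate for $\ker\dbar\cap\mathrm{Dom}(\dbar^*)$, then Hahn--Banach and Riesz representation). The only step you leave implicit --- likewise contained in the argument of \cite{DP02} --- is that the identity $\dbar v=f-w$, obtained as a distributional equation on $X\setminus Z_\delta$, extends across the analytic set $Z_\delta$ to all of $X$; this is standard because the bound $\Phi_\delta\le A$ makes $v$ and $w$ locally $L^2$ with respect to a smooth metric, so the usual cut-off argument for extending $\dbar$-equations across analytic sets applies.
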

By lemma 1 and condition (d) of proposition 1, the error term $w$ satisfies the $L^2$ bound,
$$\int_X|w|_{\omega, h_{\infty}}^2e^{-A}dV_\omega\le
\int_X|w|_{\omega, h_{\infty}}^2e^{-\Phi_\delta}dV_\omega\le\int_X\frac{\delta}{\lambda_{1,\delta}+\ldots+\lambda_{q,\delta}}|f|_{\omega, h_{\infty}}^2e^{-\Phi_\delta}dV_\omega.
$$
We will show that the right hand term tends to 0 as $\delta \to 0$.
To do it, we need to estimate the ratio function $\rho_{\delta}:=\frac{\delta}{\lambda_{1,\delta}+\ldots+\lambda_{q,\delta}}$.
The ratio function is first estimated in \cite{Mou}.

By estimates (b,c) in Proposition 1, we have $\lambda_{j,\delta}(x)\ge \frac{\delta}{2}(1-\gamma_0)$ and
$\lambda_{1,\delta}(x)\ldots \lambda_{n,\delta}(x)\ge a\gamma_0^n\delta^{n-p}$.
Therefore we already find $\rho_\delta(x)\le 2/q(1-\gamma_0)$.
On the other hand, we have
$$
\int_{X\setminus Z_\delta}\lambda_{n,\delta}(x)dV_\omega\le
\int_X(\alpha+\delta\omega+dd^c\Phi_\delta)\wedge\omega^{n-1}
=\int_X(\alpha+\delta\omega)\wedge\omega^{n-1}
\le{\rm Const},
$$
therefore the ``bad set'' $S_\varepsilon\subset X\setminus Z_\delta$ of points $x$ where $\lambda_{n,\delta}(x)>\delta^{-\varepsilon}$ has a volume with respect to $\omega$ Vol$(S_\varepsilon)\le C\delta^\varepsilon$ converging to~$0$ as $\delta\to 0$.
Outside of $S_\varepsilon$, 
$$
\lambda_{q,\delta}(x)^q\delta^{-\varepsilon(n-q)}\ge \lambda_{q,\delta}(x)^q\lambda_{n,\delta}(x)^{n-q}\ge a\gamma_0^n\delta^{n-p}.
$$
Thus we have $\rho_\delta(x)\le C\delta^{1-\frac{n-p+(n-q)\varepsilon}{q}}$. 
If we take $q\ge n-\nd(L)+1$ and $\varepsilon>0$ small enough, the exponent of $\delta$ in the final estimate is strictly positive.
Thus there exists a subsequence $(\rho_{\delta_\ell})$, $\delta_\ell\to 0$, that tends almost everywhere to $0$ on~$X$.

Estimate (e) in Proposition 1 implies the
H\"older inequality
$$
\int_X\rho_\delta|f|_{\omega, h_{\infty}}^2\exp(-\Phi_{\delta})dV_\omega\le
e^A\Big(\int_X\rho_\delta^p|f|_{\omega, h_{\infty}}^2e^{-p(1+b\delta)\varphi_{\min}}dV_\omega\Big)^{\!\frac{1}{p}}
\Big(\int_X|f|_{\omega, h_{\infty}}^2e^{-q\gamma_0\Phi_{1,\delta}}dV_\omega\Big)^{\!\frac{1}{q}}
$$
for suitable $p,q >1$ as in the proposition.
$|f|^2_{\omega, h_{\infty}} \leq C$ for some constant $C >0$ since $X$ is compact.
Taking $\delta \to 0$ yields that $w_{\delta} \to 0$ in $L^2( h_{\infty})$ by Lebesgue dominating theorem.

$H^q(X,K_X \otimes L)$ is a finite dimensional Hausdorff vector space whose topology is induced by the $L^2$ Hilbert space topology on the space of forms.
In particular the subspace of coboundaries is closed in the space of cocycles.  
Hence $f$ is a coboundary which completes the proof.

For any singular positive metric $h$ on $L$, by definition, $h$ is more singular that $h_{\min}$ which implies that $\cI(h) \subset \cI(h_{\min})$.
A direct corollary of theorem 1 is the following.
\begin{mycor}
Let $(L,h)$ be  a  pseudo-effective  line  bundle  on  a  compact K\"ahler $n$-dimensional manifold $X$.  Then the morphism induced by inclusion $K_X \otimes L \otimes \cI(h_{}) \to K_X \otimes L$
$$H^q(X,K_X \otimes L \otimes \cI(h_{})) \to H^q(X,K_X \otimes L )$$ is 0 map for every $q \geq n-\nd(L) + 1$.
\end{mycor}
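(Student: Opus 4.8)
The plan is to deduce this directly from the main theorem (the $h_{\min}$ version) together with the sheaf inclusion $\cI(h)\subset\cI(h_{\min})$ noted just above the statement. Since $h_{\min}$ has minimal singularities among all singular positive metrics on $L$, any such $h$ is more singular than $h_{\min}$, so the associated multiplier ideal sheaves satisfy $\cI(h)\subset\cI(h_{\min})$. The whole point is therefore to exploit the functoriality of the inclusion-induced maps on cohomology, routing the natural map through the $h_{\min}$ term where the main theorem applies.

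First I would record the factorization at the level of sheaves. The inclusions
$$\cI(h)\hookrightarrow\cI(h_{\min})\hookrightarrow\cO_X$$
yield, after tensoring by the locally free sheaf $\cO(K_X\otimes L)$, a commutative diagram of sheaf morphisms
$$K_X\otimes L\otimes\cI(h)\longrightarrow K_X\otimes L\otimes\cI(h_{\min})\longrightarrow K_X\otimes L,$$
in which the composite is exactly the natural inclusion $K_X\otimes L\otimes\cI(h)\to K_X\otimes L$ appearing in the statement. Passing to $H^q(X,-)$ then gives the factorization
$$H^q(X,K_X\otimes L\otimes\cI(h))\longrightarrow H^q(X,K_X\otimes L\otimes\cI(h_{\min}))\longrightarrow H^q(X,K_X\otimes L),$$
whose composite is precisely the morphism we must show vanishes.

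Finally I would invoke the main theorem applied to $L$: it asserts that the second arrow above is the zero map for every $q\ge n-\nd(L)+1$. Consequently the composite is the zero map in the same range, which is the assertion of the corollary. As for the main obstacle, there is essentially none: all the analytic content—the curvature and singularity estimates of Proposition~1 and the $L^2$ approximation argument proving that the error term $w_\delta$ tends to $0$—is already packaged into the main theorem for $h_{\min}$. The only thing to verify is that the map induced by $\cI(h)\hookrightarrow\cO_X$ genuinely factors through $\cI(h_{\min})$, which is immediate from the inclusion of ideal sheaves and the functoriality of tensoring by a locally free sheaf and of the cohomology functor.
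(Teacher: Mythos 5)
Your proof is correct and is exactly the argument the paper intends: the paper derives the corollary in one line from the inclusion $\cI(h)\subset\cI(h_{\min})$ (valid since $h$ is more singular than $h_{\min}$) together with the main theorem, and your factorization through $H^q(X,K_X\otimes L\otimes\cI(h_{\min}))$ just spells out the functoriality that makes this a ``direct corollary.''
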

\section{Appendix: movable intersection}
\paragraph{}
In this appendix, we show that the movable intersection of cohomology classes defined in \cite{BDPP} coincides with the positive product defined in \cite{BEGZ} which might be well-known for experts.
In particular, using movable intersection instead of positive intersection, we can give a third equivalent definition of numerical dimension of a psef cohomology class.

To distinguish the notations, we will denote by $\langle ,\rangle $ for the positive product and $\langle\! \langle, \rangle\! \rangle$ for the movable intersection.
In other words, it shows that the numerical definition of the psef class $\alpha$ can either defined to be the largest number such that $\langle \alpha^p \rangle \neq 0$ or such that $\langle\! \langle \alpha^p \rangle \!\rangle \neq 0$.

We start by recalling the definition of the movable intersection given in Theorem 3.5 of \cite{BDPP}.
Let $(X, \omega)$ be a compact K\"ahler manifold and $\alpha$ be a psef class on $X$.
To simplify the notations, we only define $\langle\! \langle \alpha^p \rangle \!\rangle$ where the general case is similar. 
First assume that $\alpha$ is big.
To know the value of the product pairing with any $(n-p, n-p)$-smooth form, it is enough to 
know its value with a countable dense family of smooth forms.
Since for any $(n-p,n-p)$-smooth form $u$, $u=C \omega^{n-p}-(C \omega^{n-p}-u)$.
For $C >0$ big enough, both $C \omega^{n-p}$ and $C \omega^{n-p}-u$ is strongly positive forms (since $X$ is compact).
Thus it is enough to consider only a countable dense family of strongly positive forms.

Fix a smooth closed $(n-p,n-p)$ strongly-positive form $u$ on $X$. We select K\"ahler currents $T\in\alpha$ with
analytic singularities, and a log-resolution
$\mu:\tilde{X} \to X$ such that 
$$
\mu^* T=[E]+\beta
$$
where $[E]$ is the current associated to a $\R$-divisor and $\beta$ a semi-positive form.
We consider the direct image current
$\mu_*(\beta \wedge\ldots\wedge\beta)$.
Given two closed positive $(1,1)$-currents $T_1, T_2 \in \alpha$, we write $T_j= \theta+i \d \dbar \varphi_i$ ($j=1,2$) for some smooth form $\theta \in \alpha$.
Define $T:= \theta+ i \d \dbar \max(\varphi_1, \varphi_2)$. 
We get a current with analytic singularities less singular than these two currents.
By this way, if we change the representative $T$ with another current $T'$,
we may always take a simultaneous log-resolution $\mu$ such that
$\mu^* T'=[E']+\beta'$, and we can always
assume that $E'\le E$.
By calculation, we find 
$$
\int_{\tilde{ X}}\beta' \wedge\ldots\wedge\beta' \wedge\mu^* u
\ge\int_{\tilde{ X}}\beta \ldots\wedge\beta\wedge\mu^* u.
$$ 
In fact, we have
$$
\int_{\tilde{ X}}\beta' \wedge \beta \wedge \ldots\wedge\beta \wedge\mu^* u=\int_{\tilde{ X}}(\beta+[E]-[E']) \wedge \beta \wedge \ldots\wedge\beta \wedge\mu^* u
$$$$
\ge\int_{\tilde{ X}}\beta \ldots\wedge\beta\wedge\mu^* u.
$$ 
A similar substitution applies to change all $\beta'$ by $\beta$.

It can be shown that the closed positive currents $\mu_*(\beta \wedge\ldots
\wedge\beta )$ are uniformly bounded in mass.
In fact, for any K\"ahler metric $\omega$ in $X$, there exists a constant $C > 0$ such that
$C\{\omega\}-\alpha$ is a K\"ahler class. In other words, there exists some K\"ahler form $\gamma$ on $X$  in the cohomology class $C\{\omega\}-\alpha$. By
pulling back with $\mu$, we find 
$$C\mu^* \omega - ([E] + \beta) \equiv \mu^* \gamma,$$ hence
$\beta \equiv C\mu^* \omega - ([E] +\mu^* \gamma)$
where $\equiv$ means in the same cohomology class.
By performing again a substitution in the integrals, we find
$$\int_{\tilde{X}} \beta^k \wedge \mu^* \omega^{n-k} \leq C^k \int_{\tilde{X}} \mu^* \omega^n=C^k \int_{X} \omega^n.$$

For each of the integrals associated with a countable dense family 
of forms $u$,  the supremum of $\int_{\tilde{ X}}\beta \ldots\wedge\beta\wedge\mu^* u$ is achieved by a sequence of currents 
$(\mu_m)_*(\beta_{m}\wedge\ldots\wedge\beta_{m})$ obtained
as direct images by a suitable sequence of modifications $\mu_m:
\tilde{X}_m\to X$ and suitable $\beta_m$'s.
By extracting a subsequence, we can achieve that this sequence is weakly 
convergent and we set
$$
\langle\!\langle \alpha^p \rangle \!\rangle :=
\mathop{\lim\uparrow}\limits_{m\to+\infty}
\{(\mu_m)_*(\beta_{m}\wedge\ldots\wedge\beta_{m})\}
$$
In the general case when $\alpha$ is only psef, we define $$
\langle\! \langle \alpha^p \rangle \!\rangle :=
\mathop{\lim\downarrow}\limits_{\delta\downarrow 0}
\langle\! \langle (\alpha+\delta\{\omega\})^p \rangle \!\rangle.
$$

We now prove that these two products coincide for psef classes. Since in the two cases, the products are the limit of the products of big classes in $H^{p,p}(X, \R)$, without loss of generality, we can assume $\alpha$ to be big.
We state it in the following lemma.
\begin{mylem}
For $\alpha$ a big class, for any $1 \le p \le n$, we have
$$\langle \alpha^p \rangle=\langle\! \langle \alpha^p \rangle \! \rangle.$$
\end{mylem}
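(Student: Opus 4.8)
The plan is to realise both $\langle \alpha^p\rangle$ and $\langle\!\langle\alpha^p\rangle\!\rangle$ as one and the same monotone limit of the cohomology classes $\{\mu_*(\beta^p)\}$ attached to the K\"ahler currents $T\in\alpha$ with analytic singularities used in the construction above, the bridge between the two constructions being the current identity
$$\mu_*(\beta^p)=\langle T^p\rangle,$$
where $\langle T^p\rangle$ is the non-pluripolar product of \cite{BEGZ}. Granting this identity, the definition of the movable intersection recalled above reads $\langle\!\langle\alpha^p\rangle\!\rangle=\lim\uparrow\{\mu_*(\beta^p)\}=\lim\uparrow\{\langle T^p\rangle\}$, the limit being taken over the family of such $T$, which is directed by the $\max$-of-potentials construction already used to compare two representatives; on the other side the positive product of \cite{BEGZ} is, for a big class, the class $\{\langle T_{\min}^p\rangle\}$ of the non-pluripolar product of a current with minimal singularities. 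So the whole content reduces to the bridge identity together with the assertion that the directed supremum of the $\{\langle T^p\rangle\}$ is attained, in the limit, by the minimal-singularities value.

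First I would prove the bridge identity. Put $Z=\mu(|E|)\subset X$. On $X\setminus Z$ the map $\mu$ is a biholomorphism and $\beta=\mu^*T$, so away from $Z$ both $\mu_*(\beta^p)$ and $\langle T^p\rangle$ are the ordinary smooth power $(T|_{X\setminus Z})^p$. Since $E\subset\tilde X$ and $Z\subset X$ are analytic subsets of positive codimension, hence of Lebesgue measure zero, for every smooth closed test form $u$ of bidegree $(n-p,n-p)$ one has
$$\mu_*(\beta^p)(u)=\int_{\tilde X}\beta^p\wedge\mu^*u=\int_{\tilde X\setminus E}\beta^p\wedge\mu^*u=\int_{X\setminus Z}T^p\wedge u,$$
and the last expression is exactly the pairing of $u$ with the non-pluripolar product $\langle T^p\rangle$, which for a current with analytic singularities along $Z$ is the trivial extension across $Z$ of the smooth power $(T|_{X\setminus Z})^p$. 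In particular the middle integral, being the integral of a smooth form against $\mu^*u$ on the compact manifold $\tilde X$, is automatically finite, which incidentally re-proves the uniform mass bound established above. Thus $\mu_*(\beta^p)=\langle T^p\rangle$ as currents, so in particular as cohomology classes.

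It then remains to match the two suprema. One inequality is immediate: any such $T$ is more singular than a current $T_{\min}$ with minimal singularities, so by the monotonicity of non-pluripolar products (the inequality already verified above at current level, formalised in \cite{BEGZ}) one gets $\{\langle T^p\rangle\}\le\{\langle T_{\min}^p\rangle\}$, whence $\langle\!\langle\alpha^p\rangle\!\rangle\le\langle\alpha^p\rangle$. For the reverse inequality I would regularise $T_{\min}$ by a sequence $T_k$ of currents with analytic singularities (Demailly's regularisation, \cite{Dem12}) and invoke the Fujita-type approximation for big classes of \cite{BEGZ}, which gives $\int_X\langle T_k^p\rangle\wedge\omega^{n-p}\to\int_X\langle T_{\min}^p\rangle\wedge\omega^{n-p}$; since the classes $\{\langle T_{\min}^p\rangle\}-\{\langle T_k^p\rangle\}$ are pseudoeffective with masses tending to $0$, they tend to $0$, so $\{\langle T_k^p\rangle\}\to\{\langle T_{\min}^p\rangle\}$ and hence $\langle\alpha^p\rangle\le\langle\!\langle\alpha^p\rangle\!\rangle$.

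The routine part is the bridge identity, which is purely a measure-zero bookkeeping argument on the exceptional set. The main obstacle is the reverse inequality of the last paragraph: upgrading the convergence of the total masses along a Demailly approximation to the convergence of the full cohomology classes (pairing against every test form $u$, not merely against $\omega^{n-p}$), and arranging a single increasing sequence that works simultaneously for the countable dense family of forms used to define $\langle\!\langle\alpha^p\rangle\!\rangle$. This is precisely the point where the delicate continuity properties of the non-pluripolar product under approximation established in \cite{BEGZ} and the diagonal extraction implicit in Theorem 3.5 of \cite{BDPP} have to be brought together.
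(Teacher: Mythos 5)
Your bridge identity $\mu_*(\beta^p)=\langle T^p\rangle$ for K\"ahler currents with analytic singularities is correct, and so is the inequality $\langle\!\langle \alpha^p\rangle\!\rangle\le\langle\alpha^p\rangle$ you deduce from it via monotonicity of non-pluripolar products: this half coincides with the paper's own argument, which invokes Proposition 1.16 of \cite{BEGZ} at exactly the same point. The gaps are all in your reverse inequality.

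First, Demailly regularisation of $T_{\min}$ produces currents $T_k$ with analytic singularities satisfying only $T_k\ge-\varepsilon_k\omega$; they are not positive, hence not K\"ahler currents in $\alpha$, so the classes $\{\langle T_k^p\rangle\}$ are simply not among the competitors in the supremum defining $\langle\!\langle\alpha^p\rangle\!\rangle$. Consequently, even granting $\{\langle T_k^p\rangle\}\to\{\langle T_{\min}^p\rangle\}$, the conclusion $\langle\alpha^p\rangle\le\langle\!\langle\alpha^p\rangle\!\rangle$ does not follow as you state it: you must replace $T_k$ by the K\"ahler current $T_k+2\varepsilon_k\omega$ in the perturbed class $\alpha+2\varepsilon_k\{\omega\}$, bound its product class by $\langle\!\langle(\alpha+2\varepsilon_k\omega)^p\rangle\!\rangle$, and then remove the perturbation using the fact that the movable product of a psef class is the decreasing limit of those of $\alpha+\delta\omega$. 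This detour through $\alpha+\delta\omega$ is precisely the mechanism of the paper's proof, and it cannot be skipped. Second, your monotonicity claim has the wrong sign: the regularising potentials decrease to $\varphi_{\min}$, so each $T_k$ is \emph{less} singular than $T_{\min}$, and (after the positivity correction above) monotonicity gives $\{\langle (T_k+\varepsilon_k\omega)^p\rangle\}\ge\{\langle T_{\min}^p\rangle\}$; it is this difference that is psef, and the masses converge from above, driven by the continuity of $\varepsilon\mapsto\langle(\alpha+\varepsilon\{\omega\})^p\rangle$ on the big cone, not by any approximation from below. (Your upgrade from mass convergence to class convergence via psef differences is a sound mechanism once the sign is fixed, and it also disposes of your worry about the countable family of test forms.) Third, and most seriously, the ``Fujita-type approximation for big classes of \cite{BEGZ}'' that carries the whole weight of your argument --- convergence of non-pluripolar masses along a Demailly approximation of $T_{\min}$ --- is not an off-the-shelf statement in that paper; it is essentially the lemma itself tested against $\omega^{n-p}$. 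It can be derived from what \cite{BEGZ} does contain (monotonicity plus continuity of positive products) via the sandwich $\{\langle T_{\min}^p\rangle\}\le\{\langle(T_k+\varepsilon_k\omega)^p\rangle\}\le\langle(\alpha+\varepsilon_k\{\omega\})^p\rangle$, but that derivation is exactly the perturbation argument you omitted. By contrast, the paper avoids any mass-convergence theorem: it fixes a strongly positive test form $u$, uses Bedford--Taylor continuity of $(T_\varepsilon+\delta\omega)^p$ on $X\setminus A$ (where all potentials involved are locally bounded, $A$ being the non-K\"ahler locus) together with Fatou's lemma to get $\int_X\langle\alpha^p\rangle\wedge u\le\liminf_{\varepsilon\to0}\int_{X\setminus A}(T_\varepsilon+\delta\omega)^p\wedge u\le\int_X\langle\!\langle(\alpha+\delta\omega)^p\rangle\!\rangle\wedge u$, and then lets $\delta\to0$.
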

\begin{proof}
It is enough to prove by duality that for any smooth closed $(n-p,n-p)$-strongly positive form $u$ on $X$, we have
$$\int_X \langle \alpha^p \rangle \wedge u=\int_X \langle\! \langle \alpha^p \rangle \! \rangle \wedge u.$$
Denote by $A$ the non-K\"ahler locus of $\alpha$ which  is the pole of some K\"ahler current $T$ in $\alpha$ with analytic singularities.
Denote by $T_{\min} \in \alpha$, the current with minimal singularities in $\alpha$.
By definition, it is less singular than the K\"ahler current $T$.
In particular, the potential of $T_{\min}$ is locally bounded outside $A$.
Let $T_{\varepsilon}$ be a regularisation of $T_{\min}$ such that $T_{\varepsilon} \ge -\varepsilon \omega$.
Their potentials are locally bounded outside $A$ as $T_{\min}$'s is.
By weak continuity of the Bedford-Taylor Monge-Amp\`ere operator along decreasing sequences we have on $X \setminus A$ for any $\delta >0$ and $\varepsilon \le \delta$
$$(T_{\min}+\delta \omega)^p \to (T_{\varepsilon}+\delta \omega)^p$$
as current.
By Fatou lemma we have
$$\int_{X \setminus A} (T_{\min}+\delta \omega)^p \wedge u \leq \liminf_{\varepsilon \to 0} \int_{X \setminus A}(T_{\varepsilon}+\delta \omega)^p \wedge u.$$
Since the non-pluripolar product of currents has no mass along any analytic set, the left hand term has limit equal to
$\int_X \langle \alpha^p \rangle \wedge u$.
We remark that both $\langle \alpha^p \rangle$ and $\langle \! \langle \alpha^p \rangle \! \rangle$ depend continuously on $\alpha$ in the big cone. 
Since $T_\varepsilon$ has analytic singularities, there exists a modification $\mu:\tilde{X} \to X$ such that 
$$
\mu^* T_\varepsilon=[E]+\beta
$$
with $E$ associated to a $\R$-divisor and $\mu$ a biholomorphism on $X \setminus A$.
So we have
$$\int_{X \setminus A}(T_{\varepsilon}+\delta\omega)^p \wedge u = \int_{X \setminus A} \mu_* (\beta+\delta \mu^* \omega)^p \wedge u \leq \int_X \langle\! \langle (\alpha+\delta \omega)^p \rangle \! \rangle \wedge u.
$$
We remark that $T_\varepsilon+\delta \omega$ is a K\"ahler current with analytic singularities for $\varepsilon <\delta$. 
When $\delta \to 0$, we have
$$\liminf_{\varepsilon \to 0} \int_{X \setminus A}(T_{\varepsilon}+\delta \omega)^p \wedge u \le \int_X \langle\! \langle \alpha^p \rangle \! \rangle \wedge u.$$
For the other direction, for any $T \in \alpha$ a K\"ahler current with analytic singularities, there exists a modification $\mu:\tilde{X} \to X$ such that 
$$
\mu^* T=[E]+\beta
$$
as above.
By the definition of non K\"ahler locus, $T$ is locally bounded outside $A$.
The modification can be achieved by a composition of blow-ups with smooth centres in $A$, so $\mu$ is a biholomorphism on $X \setminus A$.
So we have
$$\int_{X \setminus A}T^p \wedge u = \int_{X \setminus A}\mu_* (\beta^p) \wedge u =\int_{\tilde{X}} \beta^p \wedge \mu^*u\leq \int_{X \setminus A} \langle T_{\min}^p \rangle \wedge u= \int_X \langle T_{\min}^p \rangle \wedge u.
$$
The inequality use proposition 1.16 in \cite{BEGZ} cited above and the fact that $T_{\min}$ is less singular than $T$.
By taking supremum among all K\"ahler currents with analytic singularities in the cohomology class $\alpha$, we have
$$\int_X \langle \alpha^p \rangle \wedge u \geq \int_X \langle\! \langle \alpha^p \rangle \! \rangle \wedge u.$$
\end{proof}
\textbf{Acknowledgement} I thank Jean-Pierre Demailly, my PhD supervisor, for his guidance, patience and generosity. 
I would also like to express my gratitude to colleagues of Institut Fourier for all the interesting discussions we had. This work is supported by the PhD program AMX of \'Ecole Polytechnique and Ministère de l'Enseignement Supérieur et de la Recherche et de l’Innovation, and the European Research Council grant ALKAGE number 670846 managed by J.-P. Demailly.


\begin{thebibliography}{9}
\bibitem[BDPP13]{BDPP}
S. Boucksom, J.P. Demailly, M. Paun and Th.Peternell, {\em The pseudo-effective cone of a compact K\"ahler manifold and varieties of negative Kodaira dimension,} arXiv:math.AG/0405285; to appear in J. Alg. Geometry in 2013.
\bibitem[BEGZ10]{BEGZ}
S\'ebastien Boucksom, Philippe Eyssidieux, Vincent Guedj, Ahmed Zeriahi, {\em Monge-Amp\`ere equations in big
cohomology classes,} Acta Math. 205 (2010), no. 2, 199–262.
\bibitem[Bou04]{Bou}
S\'ebastien Boucksom, \emph{Divisorial Zariski decompositions on compact complex manifolds}, Ann. Sci. \'{E}cole Norm. Sup. (4) \textbf{37} (2004), no. 1, 45-76.
\bibitem[Cao14]{Cao17}
Junyan Cao,
{\em Numerical dimension and a Kawamata-Viehweg-Nadel type vanishing theorem on compact K\"ahler manifolds, }
Compos. Math. 150 (2014), n.11, pp 1869-1902.
\bibitem[Dem82]{Dem82}
Jean-Pierre  Demailly.
{\em Estimations L2 pour  l'op \'erateur $\dbar$ d'un  fibr\'e  vectoriel  holomorphe  semi-positif  au-dessus  d'une  vari\'et\'e  k\"ahl\'erienne complete.}   Ann.   Sci. Ecole Norm.   Sup.   (4),  15(3):457–511, 1982.
\bibitem[Dem12]{Dem12}
Jean-Pierre Demailly, \emph{Analytic methods in algebraic geometry}.
{International Press}, 2012.
\bibitem[Dem14]{Dem14}
\newblock
Jean-Pierre Demailly;
\newblock {\em On the cohomology of pseudoeffective line bundles,} written exposition of a talk given at the Abel Symposium, Trondheim, July 2013; manuscript Institut Fourier, January 9, 2014.
\bibitem[DP03]{DP02}
Jean-Pierre Demailly and Thomas Peternell.
{\em A Kawamata-Viehweg vanishing theorem on compact K\"ahler manifolds.In Surveys in differential geometry,} Vol. VIII (Boston, MA, 2002), Surv.  Differ.Geom., VIII, pages 139–169.  Int.  Press, Somerville, MA, 2003.
\bibitem[DPS94]{DPS94}
Jean-Pierre Demailly, Thomas Peternell, and Michael Schneider.
\newblock {\em Compact complex manifolds with numerically effective tangent bundles.}
\newblock  J. Algebraic Geom., 3(2):295--345, 1994.
\bibitem[DPS01]{DPS01}
Jean-Pierre Demailly, Thomas Peternell, and Michael Schneider,{\em Pseudoeffective line bundles on compact K\"ahler manifolds,} {International Journal of Mathematics}, 2001, 12(06): 689-741.
\bibitem[GZ14a]{GZ14a}
Qi’an Guan and Xiangyu Zhou.
\newblock strong  openness  conjecture  and  related  problems  for  plurisubhar-monic functions.  ArXiv e-prints, math.CV/1401.7158, 2014.
\bibitem[GZ15a]{GZ14b}
Qi’an Guan and Xiangyu Zhou.
\newblock Effectiveness of demailly’s strong openness conjecture and relatedproblems.  ArXiv e-prints, math.CV/1403.7247, 2014.
Inventiones mathematicae volume 202, 635–676(2015).
\bibitem[GZ15b]{GZ15}
Q. Guan, X. Zhou: \emph{A solution of an $L^{2}$ extension problem with an optimal estimate and applications}. Annals of Mathematics, 2015, 181(3): 1139-1208.
\bibitem[GZ15c]{GZ13}
Qi’an Guan and Xiangyu Zhou.
\newblock {\em A proof of Demailly’s
strong openness conjecture.}  ArXive-prints, math.CV/1311.3781, Annals of Mathematics, 2015, 182: 605-616.
\bibitem[GZ17]{GZ17}
Vincent Guedj, Ahmed Zeriahi,
{\em Degenerate Complex Monge–Ampère Equations,}
EMS Tracts in Mathematics, Volume: 26; 2017; 496 pp.ISBN: 978-3-03719-167-5.
\bibitem[Hir64]{Hir64}
Heisuke Hironaka. {\em Resolution of singularities of an algebraic variety over a field of characteristiczero.  I, II. } Ann.  of Math.  (2) 79 (1964), 109–203; ibid.  (2), 79:205–326, 1964.
\bibitem[Mou98]{Mou}
Mourougane, Christophe. {\em Versions k\"ahl\'eriennes du th\'eor\`eme d'annulation de Bogomolov.} Collectanea Mathematica 49.2-3 (1998): 433-445.
\bibitem[Tos19]{Tos}
Valentino Tosatti,
{\em Orthogonality of divisorial Zariski decompositions for classes with volume zero,}
Tohoku Math. J. 71 (2019), no.1, 1-8 
\bibitem[Yau78]{Yau78}
Shing Tung Yau. {\em On  the  Ricci  curvature  of  a  compact  K\"ahler  manifold  and  the  complex  Monge-Amp\`ere equation.  I.} Comm.  Pure Appl.  Math., 31(3):339–411, 1978.
\bibitem[Wu19]{Wu19}
Xiaojun Wu,
{\em On a vanishing theorem due to Bogomolov.}
\end{thebibliography}
\end{document}